\newcommand{\bb}[1]{\mathbb{#1}}
\newcommand{\cl}[1]{\mathcal{#1}}
\title[Factorization with respect to tensor products of nest algebras]{Reverse Cholesky factorization and tensor products of nest algebras}
\author[Paulsen]{Vern I. Paulsen}
\address{Institute for Quantum Computing and Department of Pure Mathematics\\
University of Waterloo\\
200 University Avenue West\\
Waterloo, ON, Canada  N2L 3G1}
\email{vpaulsen@uwaterloo.ca}
\author[Woerdeman]{Hugo J.~Woerdeman}
\address{Department of Mathematics \\
Drexel University\\
3141 Chestnut St.\\
Philadelphia, PA, 19104}
\email{hugo@math.drexel.edu}
\thanks{VP is partially supported by an NSERC grant. HW was partially supported by Simons Foundation grant
355645, and the Institute for Quantum Computing at the University of Waterloo. }
\subjclass[2010]{47A46, 47A68}
\keywords{}
\theoremstyle{plain}
\newtheorem{thm}{Theorem}[section]
\newtheorem{cor}[thm]{Corollary}
\newtheorem{lem}[thm]{Lemma}
\newtheorem{prop}[thm]{Proposition}
\numberwithin{equation}{section}
\newcommand{\mc}{\mathcal}
\newcommand{\beq}{\begin{equation}}
\newcommand{\eeq}{\end{equation}}
\theoremstyle{remark}
\newtheorem{ex}[thm]{Example}
\numberwithin{equation}{section}
\newcommand{\bbm}{\begin{bmatrix}}
\newcommand{\ebm}{\end{bmatrix}}
\begin{document}

\begin{abstract} We prove that every positive semidefinite matrix over the natural numbers that is eventually 0 in each row and column can be factored as the product of an upper triangular matrix times a lower triangular matrix.
We also extend some known results about factorization with respect to tensor products of nest algebras. Our proofs use the theory of reproducing kernel Hilbert spaces.
\end{abstract}

\maketitle

\section{Introduction}\label{sec:Intro}
It is well-known that if $P=(p_{i,j})_{i,j \in \bb N}$ is the matrix representation of a bounded, positive semidefinite operator on the Hilbert space $\ell^2(\bb N)$, then $P$ can be factored as $P=LL^*$, where $L$ is lower triangular. This is often called the Cholesky factorization and can be obtained by applying the Cholesky algorithm, see \cite{PR}. Somewhat surprisingly, not every such $P$ can be factored as $P= UU^*$ with $U$ upper triangluar. 

Nonetheless, this latter type of upper-lower factorization is often important.  For example suppose that $P=T_p$ is the Toeplitz matrix whose symbol is a positive function $p$ on the unit circle. A classic result in function theory, says that there is a factorization $p = |f|^2$ with $f$ analytic on the disc if and only if $log(p)$ is integrable. This yields an upper-lower factorization, $T_p= T_f^*T_f$.
Conversely, it is known that  $T_p$ has an upper-lower factorization if and only if the symbol $p$ admits such an analytic factorization.  Thus, upper-lower factorizations for Toeplitz matrices is intimately related to the classical theory of analytic factorization. These results are discussed fully in \cite{AFMP}.

These considerations lead the authors of \cite{AMP} and  \cite{AFMP} to consider upper-lower factorizations. They proved that to each such positive matrix $P$, one could affiliate a reproducing kernel Hilbert space, and then they gave necessary and sufficient conditions in terms of properties of that Hilbert space for $P$ to have an upper-lower factorization.

When $R=(r_{i,j})_{1 \le i,j \le n}$ is a finite positive semidefinite matrix, then by implementing the Cholesky algorithm starting with the last entry, one obtains a factorization $R= UU^*$ with $U$ upper triangluar. For this reason a factorization of $P=UU^*$ with $U$ upper triangular is often referred to as a {\it reverse Cholesky factorization}. In fact, work on this topic prior to \cite{AFMP} often used the method of truncating $P$ at some point $n$, call it $P_n$, so that there was a last entry, factoring $P_n = U_nU_n^*$, then letting $n \to +\infty$ and imposing conditions to guarantee that these $U_n$'s possessed some type of limit.

In this paper we refine the reproducing kernel Hilbert space results somewhat and then apply our refinement to two situations. The first result that we obtain is that any matrix $P$ as above that is eventually 0 in each row and column, has an upper-lower factorization.  This extends considerably the well-known result that any positive matrix with finite bandwidth has an upper-lower factorization.

The second result uses the multi-variable analogues of these results, which were also developed in \cite{AMP} and \cite{AFMP}, and applies them to extend some results of \cite{AK1} and \cite{AK2} about factorization in tensor products of nest algebras.

Let $\cl H$ be a Hilbert space, $\cl B(\cl H)$ be the Banach algebra of bounded operators on $\cl H$, and let $\cl N$ be a {\it nest} of orthogonal projections in ${\cl B} ({\cl H})$ in the sense of \cite{Dav2}. Thus, ${\cl N}$ is a strongly closed, linearly ordered collection of projections on $\cl H$, containing $0$ and the identity $I$. The {\em nest algebra} corresponding to $\cl N$ is defined as 
$$ {\rm Alg} \ {\cl N} := \{ A \in {\cl B}({\cl H} ) : P^\perp A P =0 \ P \ {\rm for \ all} \ P\in {\cl N } \} , $$
where $P^\perp = I-P$. 
The question of finding a factorization $A=BB^*$ with $B\in  {\rm Alg} \ {\cl N}$ for a positive definite $A$, goes back to Arveson \cite{Arv}. Accounts of the various results since may be found in \cite{Dav}, \cite{AK1}, \cite{AK2}.  In this paper we study the factorization problem where $B$ is required to lie in a tensor product of nest algebras.

For Hilbert spaces ${\cl H}_1, \ldots , {\cl H}_d$ we let ${\cl H}_1\otimes \cdots \otimes {\cl H}_d$ denote their Hilbertian tensor product. If ${\cl A}_i \subseteq {\cl H}_i$, $i=1\ldots , d $, we let ${\cl A}_1\otimes \cdots \otimes {\cl A}_d$ denote the weakly closed subalgebra of ${\cl B} ( {\cl H}_1\otimes \cdots \otimes {\cl H}_d )$ generated by elementary tensors ${ A}_1\otimes \cdots \otimes {A}_d$, where $A_i \in {\cl A}_i$, $i=1,\ldots , d$. In \cite{GHL} tensor products of nest algebras were studied, where among other things it was shown that for nests ${\cl N}_i \subset {\cl B}({\cl H}_i )$, $i=1,\ldots , d $, 
$$ {\rm Alg} \ ({{\cl N}_1\otimes \cdots \otimes {\cl N}_d}) = {\rm Alg} \ {{\cl N}_1\otimes \cdots \otimes {\rm Alg}\ {\cl N}_d} . $$

We consider the question of when a positive semidefinite operator $Q \in {\cl B} ( {\cl H}_1\otimes \cdots \otimes {\cl H}_d )$ allows a factorization $Q=BB^*$ with $B \in  {\rm Alg} \ ({{\cl N}_1\otimes \cdots \otimes {\cl N}_d})$. 
There are certainly nests for which this fails in general. For example, if ${\cl N} = \{ 0 , \begin{pmatrix} 1 & 0 \cr 0 & 0 \end{pmatrix} , I \} \subset {\cl B} ({\bb C}^2)$ and $U=(U_{ij})_{i,j=1}^4$ is an upper triangular invertible matrix with $U_{23}\neq 0$, then $UU^*$ cannot be written as $BB^*$, with $B \in  {\rm Alg} \ ({{\cl N}\otimes {\cl N}})$. Indeed, the upper-lower reverse Cholesky factorization is unique up to a right multiplication with a diagonal unitary $D$, and $UD \not \in  {\rm Alg} \ ({{\cl N}\otimes {\cl N}})$. We will show, however, that if ${\cl N}$ is of order type ${\mathbb N}$, then any invertible, positive definite $Q$ factors as $BB^*$ with $B \in  {\rm Alg} \ ({\cl N}^{\otimes d} )$. The key difference is that when the nest is infinite, then we can use a Hilbert hotel type of argument.

\section{Factorization and Multi-variable Reproducing Kernel HIlbert Spaces}

In this section we recall the results of \cite{AFMP} connecting upper-lower factorization of operators with properties of an affiliated reproducing kernel Hilbert space.
 For a general reference on these spaces see either \cite{Aro} or \cite{PR}. We use the set up from \cite{AMP} and \cite{AFMP}. Let ${\mc C}$ be a Hilbert space and $G\subseteq {\mathbb C}^d$ be an open neighborhood of $0$. Let $K: G\times G \to {\mc B} ({\mc C} )$ be analytic in the first variable and co-analytic in the scond variable, such that $K$ is {\it positive}, i.e.,
$$ \sum_{i,j=0}^n \langle K(z_i,z_j) x_j , x_i \rangle_{\mc C} \ge 0 \ {\rm for \ all}\ n\in{\mathbb N}, z_1,\ldots , z_n \in G \ {\rm and} \ x_1, \ldots , x_n \in {\mc C} . $$ 
With $K$ we associate a reproducing kernel Hilbert space consisting of analytic functions on $G$, which is the completion of $$ \{ f(z) = \sum_{j=1}^n K(z,w_j) x_j : n \in {\mathbb N}, w_1, \ldots , w_n \in G, x_1, \ldots , x_n \in {\mc C} \}  $$ with inner product defined via
$$ \langle K(z,w_1)x_1 , K(z,w_2) x_2 \rangle = \langle K(w_1, w_2)x_1 , x_2 \rangle_{\mc C} . $$
Let ${\mathbb N}_0=\{ 0, 1, \ldots \}$ and $d \in {\mathbb N}$. Then ${\mathbb N}_0^d$ is partially ordered by setting $I=(i_1, \ldots , i_d ) \ge (j_1, \ldots , j_d ) = J$ if and only if $i_k \le j_k$, $k=1,\ldots , d $. If $z=(z_1, \ldots , z_d) \in {\mathbb C}^d$ we set $\bar{z} = (\bar{z}_1, \ldots , \bar{z}_d )$ and $z^I = z_1^{i_1} \cdots z_d^{i_d}$. If $Q=(Q_{I,J})_{I,J \ge 0}$, $Q_{I,J} \in {\mc B} ({\mc C})$, is positive semidefinite on finite sections and $K(z,w):= \sum_{I,J \ge 0} z^I\bar{w}^J Q_{I,J}$ is convergent on some polydisk, then by results of \cite{AMP}, $K(z,w)$ is positive on that polydisk. 

Let $Q \in {\mc B} (\ell^2({\mc C})^{\otimes d} )$, where we identify $\ell^2 = \ell^2(\bb N_0)$ with orthonormal basis $\{ e_i: i \in \bb N_0\}$. As explained in \cite{AFMP}, we may represent $Q$ in a standard way as $Q=(Q_{I,J})_{I,J \ge 0}$ where $Q_{I,J} \in {\mc B} ({\mc C})$. Briefly, an orthonormal basis of $(\ell^2)^{\otimes d}$ is given by $\{ e_I: I \ge 0 \}$ where $e_I:= e_{i_1} \otimes \cdots \otimes e_{i_d}$.
Hence, $\ell^2({\mc C})^{\otimes d}$ can be identified with the direct sum of copies of $\cl C$ indexed by $I \in \bb \bb N_0^d$ and $Q_{I,J}$ is the restriction of $Q$ to $\cl C_J$ followed by projection onto $\cl C_I$. 

If $Q$ is also positive semidefinite, then \cite{AMP} shows that $K(z,w):= \sum_{I,J \ge 0} z^I\bar{w}^J Q_{I,J}$ converges for $z,w \in {\mathbb D}^d$, and yields an associated reproducing kernel Hilbert space of analytic ${\mc C}$-valued functions on ${\mathbb D}^d$, denoted by ${\mc H} (Q)$. 

A function $f: {\mathbb C}^d \to {\mc C}$ is called a {\it polynomial} if there exists a finite collection of vectors $v_I \in {\mc C}$, $I \in {\mathbb N}_0^d$, so that $f(z) = \sum v_I z^I$. The {\it degree} of $f$ is $\max \{ |I| : v_I \neq 0 \}$, where $|I| = |(i_1, \ldots , i_d ) | := i_1+ \cdots + i_d$.
For $J=(j_1, \ldots , j_d)$ we denote, as usual, $J!=j_1!\cdots j_d!$ and $$ \frac{\partial^J}{\partial z^J} = \frac{\partial^{|J|}}{\partial z_1^{j_1} \partial z_2^{j_2} \cdots \partial z_d^{j_d}}. $$

\begin{prop}\label{pp} Let $Q=(Q_{I,J})_{I,J \ge 0} \in {\mc B} (\ell^2({\mc C})^{\otimes d} )$ be positive semidefinite. For $J \in {\mathbb N}_0^d$ introduce the linear operator $L_J : {\mc H} (Q) \to {\mc C}$ defined by $$L_J (f) := \frac{\partial^J}{\partial z^J} f |_{z=0} = f^{(J)} (0)$$ and for $v \in \cl C$ set 
set $\phi_{J,v} (z) =\sum_{I\ge 0} Q_{I,J}v z^I$. 
Then
\begin{enumerate}
\item the operators $L_J$ are bounded,
\item $\phi_{J,v} \in {\mc H} (Q)$, 
\item \label{fJ} $\langle f , (J!)\phi_{J,v} \rangle_{{\mc H} (Q)} = \langle L_J(f) , v \rangle_{\mc C}$ , 
\item
 $\frac{1}{J!} \|L_J\|=  \sup_{\| v \| =1 } \|\phi_{J,v} \| \le  \min \{c \ge 0 :  (Q_{I,J} Q_{K,J}^*)_{I,K\ge 0} \le c^2 Q \}  \le  \|Q\|^{1/2}, $ 
 \item the span of the $\phi_{J,v}$'s is dense in ${\cl H} (Q)$.
\end{enumerate}
\end{prop}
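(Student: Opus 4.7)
The plan is to build a single bounded operator $T:\ell^2(\mathcal C)^{\otimes d}\to\mathcal H(Q)$ satisfying $T(e_J\otimes v)=\phi_{J,v}$ and $T^*T=Q$, and then derive the five assertions from it. On the subspace $\mathcal D\subset\ell^2(\mathcal C)^{\otimes d}$ spanned by $\eta^{(w,v)}:=\sum_{K\ge 0}\bar w^K(e_K\otimes v)$, for $w\in\mathbb D^d$ and $v\in\mathcal C$, I would define $T_0\eta^{(w,v)}:=K(\cdot,w)v$ and verify
\begin{equation*}
\langle T_0\eta,T_0\mu\rangle_{\mathcal H(Q)}=\langle Q\eta,\mu\rangle_{\ell^2(\mathcal C)^{\otimes d}}\qquad(\eta,\mu\in\mathcal D)
\end{equation*}
by direct expansion on generators, where both sides reduce to the double sum $\sum_{I,K}w_b^I\bar w_a^K\langle Q_{I,K}v_a,v_b\rangle$ on $\eta^{(w_a,v_a)},\eta^{(w_b,v_b)}$. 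The vectors $\sum_K\bar w^K e_K$ are the reproducing kernels of $H^2(\mathbb D^d)\cong\ell^2(\mathbb N_0^d)$, so they span a dense subspace and $\mathcal D$ is dense in $\ell^2(\mathcal C)^{\otimes d}$; hence $T_0$ extends by continuity to a bounded $T$ with $\|T\|\le\|Q\|^{1/2}$ and $T^*T=Q$. Expanding $K(\cdot,w)v=\sum_K\bar w^K\phi_{K,v}$ and matching Taylor coefficients shows $T\eta(z)=\sum_I z^I(Q\eta)_I$ on $\mathcal D$, and this extends to all $\eta\in\ell^2(\mathcal C)^{\otimes d}$ because convergence in $\mathcal H(Q)$ implies uniform convergence on compact subsets of $\mathbb D^d$. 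Setting $\eta=e_J\otimes v$ gives (2), and (5) follows: $T$ has dense range (containing every $K(\cdot,w)v$) and sends a dense subspace to the span of the $\phi_{J,v}$'s.

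For (1), $L_J$ is bounded on $\mathcal H(Q)$ by a standard RKHS argument: point evaluation at $w$ is bounded by $\|K(w,w)\|^{1/2}$, and Cauchy's formula on a small polydisk expresses $L_J(f)=\partial^Jf(0)$ as a bounded functional of the point values of $f$. Then the Taylor formula $L_J(T\eta)=J!(Q\eta)_J$ combined with $T^*T=Q$ gives
\begin{equation*}
\langle L_J(T\eta),v\rangle_{\mathcal C}=J!\langle Q\eta,e_J\otimes v\rangle=J!\langle T\eta,T(e_J\otimes v)\rangle=\langle T\eta,J!\,\phi_{J,v}\rangle,
\end{equation*}
and continuity of both sides in $f=T\eta$, together with density of the range of $T$, extends (3) to all $f\in\mathcal H(Q)$.

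For (4), putting $f=\phi_{J,v}$ into (3) yields $J!\|\phi_{J,v}\|^2=\langle L_J(\phi_{J,v}),v\rangle\le\|L_J\|\|v\|\|\phi_{J,v}\|$, and Cauchy--Schwarz on (3) gives the reverse, hence $\sup_{\|v\|=1}\|\phi_{J,v}\|=\tfrac{1}{J!}\|L_J\|$. The block identity $(Q_{I,J}Q_{K,J}^*)_{I,K}=QP_JQ$, with $P_J$ the orthogonal projection onto the $J$-th block, gives $\|(Q\eta)_J\|^2=\langle QP_JQ\eta,\eta\rangle$; since $\|L_J(T\eta)\|=J!\|(Q\eta)_J\|$ and $\|T\eta\|^2=\langle Q\eta,\eta\rangle$, the condition $(Q_{I,J}Q_{K,J}^*)\le c^2Q$ is equivalent to $\tfrac{1}{J!}\|L_J\|\le c$ on the dense range of $T$, giving the middle inequality. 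The closing estimate $\min c\le\|Q\|^{1/2}$ comes from $QP_JQ\le Q^2\le\|Q\|Q$. The main obstacle is the norm identity $\langle T_0\eta,T_0\mu\rangle=\langle Q\eta,\mu\rangle$ on $\mathcal D$: the double-sum expansion is sensitive to the inner-product convention defining $\mathcal H(Q)$, so one must carefully track the orientation of the $w$'s and $\bar w$'s on both sides to make the identity come out right.
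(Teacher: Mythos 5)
Your proof is correct, but it takes a genuinely different route from the paper's. The paper works intrinsically in $\mathcal H(Q)$: it establishes (2) via the standard membership criterion $\phi_{J,v}(z)\phi_{J,v}(w)^*\le c^2K(z,w)$, translated into the block inequality $(Q_{I,J}\Pi_vQ_{K,J}^*)_{I,K}\le c^2Q$ and verified through $Q^2\le\|Q\|Q$; it obtains (3) by differentiating the reproducing identity $\langle f,K(\cdot,w)v\rangle=\langle f(w),v\rangle$ at $w=0$; and it proves (5) by the orthogonality argument that $f\perp\phi_{J,v}$ for all $J,v$ forces all Taylor coefficients of $f$ to vanish. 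You instead build the canonical factor $T$ with $T^*T=Q$ and $T(e_J\otimes v)=\phi_{J,v}$, realizing $\mathcal H(Q)$ as the closure of the range of $T$ (essentially re-deriving the $\overline{\mathrm{Ran}}\,Q^{1/2}$ model from the cited \cite{AMP} results), and transport everything through $T$. This buys you a transparent proof of (5) (a bounded operator with dense range maps dense sets to dense sets) and, in (4), the sharper identity $\tfrac{1}{J!}\|L_J\|=\min\{c\ge 0: (Q_{I,J}Q_{K,J}^*)_{I,K}\le c^2Q\}$ via $(Q_{I,J}Q_{K,J}^*)_{I,K}=QP_JQ$, where the paper only asserts an inequality; the cost is the extra well-definedness and extension bookkeeping for $T_0$, plus the fact that you must prove (1) independently (your Cauchy-estimate argument, using local boundedness of $\|K(w,w)\|$) before you can extend (3) from the dense range of $T$ to all of $\mathcal H(Q)$, whereas the paper gets (1) for free from (3) and the uniform bound on $\|\phi_{J,v}\|$. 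Your closing caveat about the inner-product orientation is well taken but harmless: since $K$ is a positive kernel, $K(w_1,w_2)^*=K(w_2,w_1)$, so the paper's convention agrees with the standard one and your double-sum identity $\langle T_0\eta^{(w_a,v_a)},T_0\eta^{(w_b,v_b)}\rangle=\sum_{I,K}w_b^I\bar w_a^K\langle Q_{I,K}v_a,v_b\rangle=\langle Q\eta^{(w_a,v_a)},\eta^{(w_b,v_b)}\rangle$ does come out right.
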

\begin{proof} 
First we observe that $\phi_{J,v} \in {\mc H} (A)$  if and only if $\phi_{J,v} (z) \phi_{J,v}(w)^* \le c^2 K(z,w)$
%
%
in the order on kernel functions. This translates into
$$ (Q_{I,J} \Pi_v Q_{K,J}^*)_{I,K\ge 0} \le c^2 Q , $$ where $\Pi_v$ is the orthogonal projection of ${\mc C}$ onto ${\rm span} \{ v \}$. 
%
%
%
%
 Note that since $Q=Q^*$,  $Q_{I,J} = Q_{J,I}^*$. Now, $$ (Q_{I,J} \Pi_v Q_{K,J}^*)_{I,K\ge 0} \le  (Q_{I,J} Q_{K,J}^*)_{I,K\ge 0} \le \sum_{J\ge 0} (Q_{I,J} Q_{J,K})_{I,K\ge 0} = Q^2 \le c^2 Q$$ with $c^2 = \|Q\|$.  Hence,  $\phi_{J,v} (z) \phi_{J,v}(w)^* \le c^2 K(z,w)$ and $\phi_{J,v} \in {\cl H} (Q)$ follows. Moreover, $\| \phi_{J,v} \| \le \| Q^{\frac12} \| $.

From the equality 
$$ \langle f , K(\cdot , w)v \rangle_{{\mc H} (Q)} = \langle f(w) , v \rangle_{\mc C} , $$
we obtain that
$$ \langle L_J(f) , v \rangle_{\mc C} = \frac{\partial^J}{\partial w^J} \langle f(w) , v \rangle_{\mc C} |_{w=0} = \frac{\partial^J}{\partial w^J} \langle f , K(\cdot , w)v \rangle_{{\mc H} (Q)} |_{w=0} =
\langle f , J! \phi_{J,v}\rangle_{{\mc H} (Q)}. $$
Now from (\ref{fJ}) the equality $\frac{1}{J!} \|L_J\| =  \sup_{\| v \| =1 } \|\phi_{J,v} \|$ follows immediately. 

 
Finally, suppose that $f \in {\cl H} (Q)$ and $f \perp \phi_{J,v}$ for all $J \ge 0$ and $v\in {\cl C}$. Then $L_J(f) = 0$ for all $J\ge 0$, and thus all the coefficients of the Taylor series for $f$ are 0. But since $f$ is analytic on $\bb D^d$, this implies that $f$ is 0.
 \end{proof}

\section{Cholesky factorization with respect to tensor products of nest algebras}\label{CF}

Let ${\mc C}$ be a Hilbert space. On $\ell^2({\mc C}) = \{ (\eta_j )_{j=0}^\infty : \eta_j \in {\mc C}, \sum_{j=0}^\infty \| \eta_j \|^2 < \infty \}$, let the canonical projections $P_i \in {\mc B} (\ell^2({\mc C}) )$, $i=0,1, \ldots $, be defined by
$$ P_i [ (\eta_j )_{j=0}^\infty ] = \begin{pmatrix} \eta_0 \cr \vdots \cr \eta_i \cr 0 \cr \vdots \end{pmatrix} . $$ We let ${\cl N} = \{ 0 < P_0 < P_1 < P_2 < \cdots < I \}$ be the standard nest on 
$\ell^2({\mc C})$, and denote $\ell^2({\mc C})^{\otimes d} := \ell^2({\mc C}) \otimes \cdots \otimes \ell^2({\mc C})$ ($d$ copies). 

The following result generalizes the Cholesky factorization result with respect to a nest algebra (see \cite[Theorem 13]{AK2}) to the tensor product of the above nest.

For an operator ${\mc B} ({\mc C})$ we define the {\it range space} ${\mc R} (B)$ to be the Hilbert space one obtains by equipping the range of $B$ (denoted by ${\rm Ran} \ B$) with the norm $\| y \|_{{\mc R} (B)} := \| x \|_{\mc C}$, where $x$ is the unique vector in $({\rm Ker} \ B)^\perp$ so that $Bx=y$.

\begin{thm}\label{tensornest} Let $A \in {\mc B} (\ell^2({\mc C})^{\otimes d} )$. The following are equivalent.
\begin{itemize} \item[(i)] $AA^*=BB^*$ for some $B \in {\rm Alg} ({\mc N}^{\otimes d} )$.
\item[(ii)] ${\rm Ran}\ A = {\rm Ran }\ C$ for some $C \in {\rm Alg} ({\mc N}^{\otimes d} )$.
\end{itemize}
\end{thm}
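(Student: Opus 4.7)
The implication (i) $\Rightarrow$ (ii) is immediate: when $AA^*=BB^*$, a standard consequence of Douglas' factorization lemma gives $\text{Ran}\ A=\text{Ran}\ (AA^*)^{1/2}=\text{Ran}\ (BB^*)^{1/2}=\text{Ran}\ B$, so one may take $C=B$.

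For the substantive implication (ii) $\Rightarrow$ (i), I would use the reproducing kernel machinery of Section~2. Set $Q:=AA^*$ and form $\mc H(Q)$. The hypothesis $\text{Ran}\ A=\text{Ran}\ C$ yields, via Douglas' theorem, constants $c,c'>0$ with $AA^*\le c^2 CC^*$ and $CC^*\le c'^2 AA^*$; by Aronszajn's comparison theorem for positive kernels this means that $\mc H(AA^*)$ and $\mc H(CC^*)$ coincide as vector spaces of analytic $\mc C$-valued functions on $\bb D^d$, with equivalent Hilbert-space norms.

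Next, the assumption $C\in {\rm Alg}({\cl N}^{\otimes d})$ imposes a concrete polynomial structure on the kernel of $CC^*$: writing $C=(C_{I,J})$, the entries $C_{I,J}$ vanish unless $I\le J$, so
$$ K_{CC^*}(z,w)=\sum_J c_J(z)c_J(w)^*,\qquad c_J(z):=\sum_{I\le J} C_{I,J}z^I, $$
and the family $\{c_J\}$ of operator-valued polynomials of prescribed multi-degree gives a Parseval-type frame whose range spans $\mc H(CC^*)$ densely. By the norm equivalence of the previous step, the same polynomials span $\mc H(AA^*)$ densely as well.

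The remaining step is to upgrade this density to an orthogonal decomposition $K_{AA^*}(z,w)=\sum_J b_J(z)b_J(w)^*$ inside $\mc H(AA^*)$, with each $b_J$ a polynomial of multi-degree $\le J$; reading off $B_{I,J}$ from $b_J(z)=\sum_I B_{I,J}z^I$ then produces $B\in {\rm Alg}({\cl N}^{\otimes d})$ satisfying $BB^*=AA^*$. Concretely, one orthogonalizes the $c_J$'s in the $\mc H(AA^*)$-inner product along a linear extension of the partial order on $\bb N_0^d$. The main obstacle --- and the place where the tensor-product structure (as opposed to a single nest) truly enters --- is ensuring that this multi-variable Gram--Schmidt procedure preserves the degree constraint $\deg b_J\le J$ (component-wise) for every $J$; that in turn reduces to a careful filtration/dimension count tied to the partial order on $\bb N_0^d$ and to the concrete form of the polynomial density statement transferred from $\mc H(CC^*)$.
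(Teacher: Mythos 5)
Your argument coincides with the paper's proof up to and including the polynomial-density step: (i)$\implies$(ii) via Douglas' lemma is identical, and for (ii)$\implies$(i) both you and the paper pass from ${\rm Ran}\,A={\rm Ran}\,C$ to the two-sided domination $AA^*\le \lambda CC^*$, $CC^*\le\mu AA^*$, conclude that $\mc H(AA^*)$ and $\mc H(CC^*)$ are the same space with equivalent norms, and deduce that the polynomials are dense in $\mc H(AA^*)$. (The paper gets density in $\mc H(CC^*)$ by citing \cite[Theorem 3.1]{AFMP}; your direct verification via the polynomial columns $c_J$ of $C$ is an acceptable substitute for that one direction.) The paper then finishes in one line by invoking the converse direction of \cite[Theorem 3.1]{AFMP}, which says precisely that a positive semidefinite $Q$ factors as $BB^*$ with $B\in{\rm Alg}({\mc N}^{\otimes d})$ if and only if the polynomials are dense in $\mc H(Q)$.

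The gap is in your final step, and it is a real one rather than a checkable technicality. You propose to build $B$ by Gram--Schmidt on the $c_J$'s along a linear extension of the componentwise partial order on $\bb N_0^d$, and you flag degree preservation as the ``main obstacle''; but this is exactly where the naive construction fails. Any linear extension contains indices $J'$ that precede $J$ while $J'\not\le J$ componentwise, and orthogonalizing $c_J$ against such a $c_{J'}$ introduces monomials $z^I$ with $I\le J'$ but $I\not\le J$, destroying the triangularity $\deg b_J\le J$ that membership of $B$ in ${\rm Alg}({\mc N}^{\otimes d})$ requires. (In addition, the $c_J$'s give a Parseval-type decomposition of $K_{CC^*}$, not of $K_{AA^*}$, so the orthogonalization would in any case have to be re-anchored to the kernel of $AA^*$.) The argument that actually works --- and is the content of \cite[Theorem 3.1]{AFMP} --- replaces any particular generating family by the filtration of $\mc H(AA^*)$ by the closed subspaces $\mc M_J$ spanned by polynomials of multidegree at most $J$, together with the associated orthogonal projections; density of the polynomials is exactly what makes this construction exhaust the space. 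As written, your proof of (ii)$\implies$(i) is therefore incomplete; it becomes correct, and essentially identical to the paper's, once the Gram--Schmidt sketch is replaced by an appeal to (or a proof of) \cite[Theorem 3.1]{AFMP}.
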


\begin{proof} (i)$\implies$(ii). By Douglas' lemma \cite{Douglas} we have that $AA^*= BB^*$ implies that ${\rm Ran}\ A = {\rm Ran }\ B$.

(ii)$\implies$(i). If $A=0$ there is nothing to prove, so let us assume that $A\neq 0$. Suppose $C \in {\rm Alg} ({\mc N}^{\otimes d} )$ so that ${\rm Ran}\ A = {\rm Ran }\ C$. As $A\neq 0$, clearly $C \neq 0$ as well. By Douglas' lemma \cite{Douglas}, we have that there exist $\lambda , \mu \ge 0$ so that $AA^* \le \lambda CC^*$ and $CC^* \le \mu AA^*$. We observe that since $A\neq 0 \neq C$, it follows that $\lambda , \mu >0$. Now it follows that the range spaces ${\mc R} (A)$ and ${\mc R} (C)$ are equivalent (i.e., they contain the same elements and their norms are equivalent). By \cite[Corollary 3.3]{AMP} it follows that the reproducing kernel Hilbert spaces ${\mc H} (AA^*)$ and ${\mc H} (CC^*)$ are equivalent. Next, \cite[Theorem 3.1]{AFMP} yields that the polynomials in ${\mc H} (CC^*)$ are dense. But then the same holds for ${\mc H} (AA^*)$. Again applying \cite[Theorem 3.1]{AFMP}, now in the other direction, gives that $AA^* = BB^*$ for some $B \in {\rm Alg} ({\mc N}^{\otimes d} )$. \hfill $\square$
\end{proof}

\begin{cor} Let $Q \in {\mc B} (\ell^2({\mc C})^{\otimes d} )$ be invertible and positive definite. Then $Q=BB^*$ for some $B \in {\rm Alg} ({\mc N}^{\otimes d} )$.
\end{cor}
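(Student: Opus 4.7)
The plan is to apply Theorem \ref{tensornest} directly with the trivial choice $C=I$. Since every nest algebra contains the identity (take $B = I$ in the definition: $P^\perp I P = P^\perp P = 0$ for any projection $P$), the identity belongs to $\mathrm{Alg}({\mc N}^{\otimes d})$, and its range is the full Hilbert space $\ell^2({\mc C})^{\otimes d}$.

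Next, because $Q$ is positive definite and invertible, its positive square root $A := Q^{1/2}$ is also invertible in ${\mc B}(\ell^2({\mc C})^{\otimes d})$, so $\mathrm{Ran}\,A = \ell^2({\mc C})^{\otimes d} = \mathrm{Ran}\,I$. This verifies condition (ii) of Theorem \ref{tensornest} for the operator $A$, with $C = I$ as the witness.

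By the implication (ii)$\Rightarrow$(i) of Theorem \ref{tensornest}, there exists $B \in \mathrm{Alg}({\mc N}^{\otimes d})$ with
$$ Q = AA^* = BB^*, $$
which is the desired factorization.

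There is no real obstacle: all of the technical content (density of polynomials in the relevant reproducing kernel Hilbert spaces, Douglas' lemma, and the equivalence of range and reproducing kernel Hilbert space structures) has already been absorbed into the statement of Theorem \ref{tensornest}. The only thing to notice is the simple observation that invertibility of $Q$ forces $A = Q^{1/2}$ to have full range, matching that of $I \in \mathrm{Alg}({\mc N}^{\otimes d})$.
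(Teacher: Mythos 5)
Your proposal is correct and is exactly the paper's argument: the paper also proves the corollary by applying Theorem \ref{tensornest} with $A=Q^{1/2}$ and $C=I$. You merely spell out the (true and easy) facts that $I\in{\rm Alg}({\mc N}^{\otimes d})$ and that invertibility of $Q^{1/2}$ gives ${\rm Ran}\,A={\rm Ran}\,I$.
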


\begin{proof} Apply Theorem \ref{tensornest} with the choice $A=Q^{\frac12}$ and $C=I$.
\end{proof}

Using the further analysis of reproducing kernel Hilbert spaces stated in Proposition \ref{pp} we obtain the following additional result.

\begin{thm}\label{wrb} Let $Q=(Q_{I,J})_{I,J \ge 0} \in {\mc B} (\ell^2({\mc C})^{\otimes d} )$ be positive semidefinite. Suppose that for every $J \in {\mathbb N}_0^d$ we have that $Q_{I,J} \neq 0$ for only finitely many $I$. Then $Q=BB^*$ for some $B \in {\rm Alg} ({\mc N}^{\otimes d} )$.
\end{thm}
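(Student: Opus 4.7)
The strategy is to show that polynomials form a dense subspace of the reproducing kernel Hilbert space $\mc{H}(Q)$, and then invoke \cite[Theorem 3.1]{AFMP}---the same tool used in the proof of Theorem \ref{tensornest}---to convert polynomial density into the desired factorization $Q=BB^*$ with $B \in \mathrm{Alg}(\mc{N}^{\otimes d})$.

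The key observation is that the hypothesis on $Q$ is precisely what is needed to force each generator $\phi_{J,v}$ from Proposition \ref{pp} to be a polynomial. Indeed, for fixed $J \in \mathbb{N}_0^d$ and $v \in \mc{C}$ we have
\[
\phi_{J,v}(z) = \sum_{I \ge 0} Q_{I,J} v \, z^I,
\]
and by assumption only finitely many $I$ satisfy $Q_{I,J} \neq 0$. Hence the sum is finite and $\phi_{J,v}$ is a $\mc{C}$-valued polynomial on $\mathbb{C}^d$. (Note that $\phi_{J,v} \in \mc{H}(Q)$ in the first place is already guaranteed by Proposition \ref{pp}(2).)

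By Proposition \ref{pp}(5), the linear span of $\{\phi_{J,v} : J \in \mathbb{N}_0^d,\ v \in \mc{C}\}$ is dense in $\mc{H}(Q)$. Combining this with the observation that each $\phi_{J,v}$ is a polynomial, we conclude that the polynomials are dense in $\mc{H}(Q)$. Then \cite[Theorem 3.1]{AFMP} in the direction ``polynomial density implies upper-triangular factorization'' yields $Q=BB^*$ for some $B \in \mathrm{Alg}(\mc{N}^{\otimes d})$.

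I do not expect a real obstacle here: the density result of Proposition \ref{pp}(5) was stated in anticipation of exactly this kind of application, and the column-finiteness hypothesis is tailored so that $\phi_{J,v}$ automatically lies in the space of polynomials. The entire argument is essentially a direct plug-in once one recognizes that $\phi_{J,v}$ encodes the $J$-th column of $Q$ as a power series, and that column-finiteness turns that power series into a polynomial.
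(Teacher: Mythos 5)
Your proof is correct and is essentially identical to the paper's own argument: the column-finiteness hypothesis makes each $\phi_{J,v}$ a polynomial, Proposition \ref{pp}(5) gives density of their span, and \cite[Theorem 3.1]{AFMP} converts polynomial density in ${\mc H}(Q)$ into the factorization. No gaps.
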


%
%
%
%
\begin{proof}
  Note that the fact that for every $J \in {\mathbb N}_0^d$ we have that $Q_{I,J} \neq 0$ for only finitely many $I$, implies that each $\phi_{J,v}$ is a polynomial. Since, by Proposition \ref{pp}, the span of the $\phi_{J,v}$'s is dense, the polynomials in ${\cl H} (Q)$ are dense.  The result now follows from \cite[Theorem 3.1]{AFMP}.
\end{proof}
  


\bigskip

{\it Acknowledgment.}
The research was conducted while the second author
was visiting the Institute for Quantum Computing at the University of
Waterloo. He gratefully acknowledges the hospitality of many at the
University of Waterloo.

\end{document}